\newtheorem{thm}{Theorem}
\newcommand*{\di}{\, \mathrm{d} }
\title{A simple wide range approximation of symmetric binomial distributions}
\author{Tam\'as Szabados\footnote{Address:
Department of Mathematics, Budapest University of Technology and Economics, Muegyetem rkp. 3, H
ep, 5 em, Budapest, 1521, Hungary, e-mail: szabados@math.bme.hu, telephone: +36 1 463 1111/ext. 5905, fax: +36 1 463 1677} \\
Budapest University of Technology and Economics}
\begin{document}

\maketitle

\begin{abstract}
The paper gives a wide range, uniform, local approximation of symmetric binomial distribution. The result clearly shows how one has to modify the the classical de Moivre--Laplace normal approximation in order to give an estimate at the tail as well to minimize the relative error.
\end{abstract}


The topic of this paper is a wide range, uniform, local approximation of symmetric binomial distribution, an extension of the classical de Moivre--Laplace theorem in the symmetric case \cite[Ch. VII]{Fel68}. In other words, I would like to approximate individual binomial probabilities not only in a classical neighborhood of the center, but at the tail as well. The result clearly shows how one has to modify the normal approximation in order to give a wide range estimate to minimize the relative error. The method will be somewhat similar to the ones applied by \cite{Fel45, Lit69, McK89} or in the proof of Tusn\'ady's lemma, see e.g. \cite{Mas02}; but my task is much simpler than those. This simplification makes the proof short, transparent and natural. Moreover, the result is non-asymptotic, that is, it gives explicit, nearly optimal, upper and lower bounds for the relative error with a finite $n$. Thus I hope that it can be used in both applications and teaching.

Let $(X_r)_{r \ge 1}$ be a sequence of independent, identically distributed random steps with $\mathbb{P}(X_r = \pm 1) = \frac12$ and $S_{\ell} = \sum_{r=1}^{\ell} X_r$ $(\ell \ge 1)$, $S_0 = 0$, be the corresponding simple, symmetric random walk. Then
\begin{equation}\label{eq:binprob}
\mathbb{P}(S_{\ell} = j) = \binom{\ell}{\frac{\ell + j}{2}} 2^{-2 \ell} \qquad (|j| \le \ell).
\end{equation}
Here we use the convention that the above binomial coefficient is zero whenever $\ell + j$ is not divisible by $2$. Since $\mathbb{P}(S_{\ell} = j) = \mathbb{P}(S_{\ell} = -j)$ for any $j$, it is enough to consider only the case $j \ge 0$ whenever it is convenient.

First we consider the case when $\ell = 2n$, even. Let us introduce the notation
\[
a_{k,n} := \mathbb{P}(S_{2n} = 2k) = \binom{2n}{n+k} 2^{-2n} \qquad (|k| \le n) .
\]
Also, introduce the notation
\begin{equation}\label{eq:bkn}
b_{k,n} := n \left\{\left(1 + \frac{k+\frac12}{n}\right) \log\left(1 + \frac{k}{n}\right) + \left(1 - \frac{k-\frac12}{n}\right) \log\left(1 - \frac{k}{n}\right)\right\}
\end{equation}
when $n \ge 1$ and $|k| < n$, and
\begin{equation}\label{eq:bnn}
b_{\pm n,n} := \left(2n + \frac12\right) \log 2 - \frac12 \log(2 \pi n) \qquad (n \ge 1).
\end{equation}

\begin{thm}\label{th:moivre}
(a) For any $n \ge 1$ and $|k| \le n$, we have
\begin{equation}\label{eq:exp_appr_up}
a_{k,n} \le \frac{1}{\sqrt{\pi n}} e^{-b_{k,n}} .
\end{equation}

(b) For any $n \ge 1$ and $|k| \le rn$, $r \in (0,1)$, we have
\begin{equation}\label{eq:exp_appr_down}
a_{k,n} > \frac{1}{\sqrt{\pi n}} e^{-b_{k,n}} \, \exp\left(- \frac{1}{7n} - \frac{r^4}{3(1-r^2)^2 \, n}\right) .
\end{equation}

(c) In accordance with the classical de Moivre--Laplace normal approximation, for $n \ge 3$ and $|k| \le n^{\frac23}$ one has
\begin{equation}\label{eq:class_Moivre_Lapl}
1 - 2 n^{-\frac13} < \frac{a_{k,n}}{\frac{1}{\sqrt{\pi n}} e^{-\frac{k^2}{n}}} < 1 + 2 n^{-\frac13} .
\end{equation}
\end{thm}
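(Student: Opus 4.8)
The plan is to start from the exact consequence of Stirling's formula in Robbins' form: write $m! = \sqrt{2\pi m}\,(m/e)^m e^{\rho_m}$ with $\tfrac{1}{12m+1} < \rho_m < \tfrac{1}{12m}$, and apply it to $a_{k,n} = \binom{2n}{n+k}2^{-2n} = \frac{(2n)!}{(n+k)!\,(n-k)!}\,2^{-2n}$ for $|k|<n$. The $e^{-m}$ factors cancel; the power factors $(2n)^{2n}$, $(n+k)^{-(n+k)}$, $(n-k)^{-(n-k)}$ together with $2^{-2n}$ combine (using $(2n)^{2n}2^{-2n}=n^{2n}$ and $n^{2n-(n+k)-(n-k)}=1$) into $\exp\!\big(-(n+k)\log(1+\tfrac kn)-(n-k)\log(1-\tfrac kn)\big)$; and the ratio of square roots is $\sqrt{4\pi n}\big/\big(\sqrt{2\pi(n+k)}\,\sqrt{2\pi(n-k)}\big)=(\pi n)^{-1/2}(1-k^2/n^2)^{-1/2}$. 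The key point---and the reason the statement is so clean---is that the extra factor $(1-k^2/n^2)^{-1/2}=\exp\!\big(-\tfrac12\log(1+\tfrac kn)-\tfrac12\log(1-\tfrac kn)\big)$ is exactly what upgrades the coefficients $n\pm k$ to $n\pm k+\tfrac12$, i.e.\ it supplies precisely the ``$\pm\tfrac12$'' shifts built into $b_{k,n}$ in \eqref{eq:bkn}. Hence
\[
a_{k,n}\;=\;\frac{1}{\sqrt{\pi n}}\,e^{-b_{k,n}}\,e^{\,D_{k,n}},\qquad D_{k,n}:=\rho_{2n}-\rho_{n+k}-\rho_{n-k}\quad(|k|<n),
\]
while for $|k|=n$ one checks directly that $a_{\pm n,n}=2^{-2n}=\frac{1}{\sqrt{\pi n}}e^{-b_{\pm n,n}}$ from \eqref{eq:bnn}, so \eqref{eq:exp_appr_up} holds there with equality. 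Everything now reduces to estimating $D_{k,n}$.

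For (a) it suffices to prove $D_{k,n}\le0$. Using $\rho_{2n}<\tfrac{1}{24n}$ and, by convexity of $x\mapsto1/x$, $\rho_{n+k}+\rho_{n-k}>\tfrac{1}{12(n+k)+1}+\tfrac{1}{12(n-k)+1}\ge\tfrac{2}{12n+1}\ge\tfrac{1}{24n}$ (the last step being $36n\ge1$), we get $D_{k,n}<0$ for $|k|<n$; the boundary case was just treated.

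For (b) I would bound $D_{k,n}$ from below by keeping $\rho_{2n}>\tfrac{1}{24n+1}$ (or, where needed, the sharper $\rho_{2n}>\tfrac{1}{24n}-\tfrac{1}{2880n^3}$, from the Stirling series whose truncations bracket $\rho_m$) and using $\rho_{n+k}+\rho_{n-k}<\tfrac{1}{12(n+k)}+\tfrac{1}{12(n-k)}=\tfrac{n}{6(n^2-k^2)}$. With $|k|\le rn$ this gives $D_{k,n}>\tfrac{1}{24n+1}-\tfrac{1}{6n(1-r^2)}$, and \eqref{eq:exp_appr_down} then reduces to the elementary inequality
\[
\frac{1}{6n(1-r^2)}-\frac{1}{24n+1}\;\le\;\frac{1}{7n}+\frac{r^4}{3(1-r^2)^2\,n},
\]
which one verifies by setting $u=1-r^2$, clearing the common denominators, and checking that the resulting quadratic in $u$ is nonnegative (the contribution $\tfrac{n}{24n+1}$ being worst at $n=1$). \textbf{I expect this constant bookkeeping to be the real work}: the crude approximations $\rho_{2n}\approx\tfrac1{24n}$, $\rho_{n\pm k}\approx\tfrac1{12(n\pm k)}$ already yield only $D_{0,n}\gtrsim-\tfrac1{8n}$, barely inside the target $-\tfrac1{7n}$, so the subleading Stirling corrections (and the ``$+1$'' in $\rho_m>\tfrac1{12m+1}$) must be retained, and the margin for intermediate $r$ is thin.

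Finally, (c) follows by expanding $b_{k,n}$ directly from \eqref{eq:bkn}: grouping powers of $t=k/n$ gives the convergent series $b_{k,n}=\sum_{j\ge1}\tfrac{n-j+1/2}{j(2j-1)}\,t^{2j}$, with leading term $(n-\tfrac12)t^2=\tfrac{k^2}{n}-\tfrac{k^2}{2n^2}$ and a tail that, for $|k|\le n^{2/3}$ (so $t\le n^{-1/3}\le 3^{-1/3}$), is dominated by its $j=2$ term and satisfies $\big|b_{k,n}-\tfrac{k^2}{n}\big|\le c\,n^{-1/3}$ for an explicit $c<1$. Combined with $|D_{k,n}|=O(1/n)$ from (a)--(b) this gives $a_{k,n}\big/\big(\tfrac{1}{\sqrt{\pi n}}e^{-k^2/n}\big)=e^{E}$ with $|E|\le n^{-1/3}$; since $n\ge3$ makes $n^{-1/3}<1$, the elementary estimates $e^{x}-1\le x+x^2$ and $1-e^{-x}\le x$ on $[0,1]$ turn this into $\big|a_{k,n}\big/\big(\tfrac{1}{\sqrt{\pi n}}e^{-k^2/n}\big)-1\big|<2n^{-1/3}$, which is \eqref{eq:class_Moivre_Lapl}.
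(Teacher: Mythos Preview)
Your overall route---applying Robbins' form of Stirling directly to $(2n)!$, $(n+k)!$, $(n-k)!$ to obtain the exact identity $a_{k,n}=(\pi n)^{-1/2}e^{-b_{k,n}}e^{D_{k,n}}$ with $D_{k,n}=\rho_{2n}-\rho_{n+k}-\rho_{n-k}$---is different from the paper's. The paper instead writes $a_{k,n}=a_{0,n}\cdot(\text{ratio})$, turns $\log(\text{ratio})$ into a sum of $\tanh^{-1}(j/n)$, and approximates that sum by the integral $I(k/n)=\int_0^{k/n}\tanh^{-1}t\,dt$ via the \emph{trapezoidal rule}. The two routes are linked by the identity $D_{k,n}=D_{0,n}-2n\bigl(T_{k,n}-I(k/n)\bigr)$, where $T_{k,n}$ is the trapezoidal sum; so they are estimating the same quantity, but with different intermediate bounds. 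For (a) your convexity argument is clean and correct, and for (c) your series manipulation matches the paper's.

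The real problem is (b). Your ``elementary inequality''
\[
\frac{1}{6n(1-r^2)}-\frac{1}{24n+1}\;\le\;\frac{1}{7n}+\frac{r^4}{3(1-r^2)^2\,n}
\]
is \emph{false}: at $n=2$, $r=\tfrac12$ (i.e.\ $k=1$) the left side equals $\tfrac{40}{441}=\tfrac{120}{1323}$ while the right side equals $\tfrac{17}{189}=\tfrac{119}{1323}$. The quadratic in $u=1-r^2$ that you propose to check is therefore not nonnegative. The reason is structural: bounding $\rho_{n+k}+\rho_{n-k}$ by $\tfrac{1}{12(n+k)}+\tfrac{1}{12(n-k)}=\tfrac{1}{6n(1-t)}$ (with $t=k^2/n^2$) produces a $k$-dependent error that behaves like $t/(6n)$ for small $t$, i.e.\ of order $r^2/n$, whereas the target allows only $r^4/n$. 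Adding the next Stirling correction to $\rho_{2n}$ alone (as you suggest) does not help, since the looseness is in the $\rho_{n\pm k}$ bound; and even including $-\tfrac{1}{360m^3}$ on that side still leaves essentially no margin at $n=2$, $k=1$.

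This is exactly what the paper's trapezoidal step buys. Since $(\tanh^{-1})''(x)=2x(1-x^2)^{-2}$, the trapezoidal error on $[0,k/n]$ is automatically $O\!\bigl(r^4/((1-r^2)^2 n^2)\bigr)$, and after multiplying by $2n$ one gets precisely the term $\dfrac{r^4}{3(1-r^2)^2 n}$ in \eqref{eq:exp_appr_down}, cleanly separated from the central-term error $-\tfrac{1}{7n}$ coming from $D_{0,n}$. In your decomposition the two pieces are mixed, and the crude Stirling bounds do not recover the $r^4$ scaling. To salvage your route for (b) you would need either to separate $D_{k,n}-D_{0,n}=2\rho_n-\rho_{n+k}-\rho_{n-k}$ and bound it by a second-difference argument that extracts the extra factor of $r^2$, or to import the trapezoidal estimate---which is essentially the paper's proof.
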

\begin{proof}
As usual, the first step is to estimate the central term
\[
a_{0,n} = \binom{2n}{n} 2^{-2n} = \frac{(2n)!}{(n!)^2} 2^{-2n}.
\]
By Stirling's formula, see e.g. \cite[p.~54]{Fel68}, we have:
\begin{equation}\label{eq:Stirling}
\sqrt{2 \pi n} \left(\frac{n}{e}\right)^n e^{\frac{1}{12 n + 1}} < n! < \sqrt{2 \pi n} \left(\frac{n}{e}\right)^n e^{\frac{1}{12 n}} \qquad (n \ge 1).
\end{equation}
Thus after simplification we get
\begin{equation}\label{eq:central}
\frac{1}{\sqrt{\pi n}} e^{-\frac{1}{7n}} < a_{0,n} < \frac{1}{\sqrt{\pi n}} e^{-\frac{1}{9n}}  \qquad (n \ge 1).
\end{equation}

Second, also by the standard way, for $1 \le k \le n$,
\[
a_{k,n} = a_{0,n} \frac{n (n-1) \cdots (n-k+1)}{(n+1)(n+2) \cdots (n+k)}
= a_{0,n} \frac{\left(1-\frac{1}{n}\right) \left(1-\frac{2}{n}\right) \cdots  \left(1-\frac{k-1}{n}\right)}{\left(1+\frac{1}{n}\right) \left(1+\frac{2}{n}\right) \cdots \left(1+\frac{k}{n}\right)} .
\]

So it follows that
\begin{multline}\label{eq:noncentral}
\log a_{k,n} = \log a_{0,n} - \log\left( 1+\frac{k}{n} \right) - 2 \sum_{j=1}^{k-1} \frac12 \log \frac{1+\frac{j}{n}}{1-\frac{j}{n}}  \\
= \log a_{0,n} - \log\left( 1+\frac{k}{n} \right) - 2 \sum_{j=1}^{k-1} \tanh^{-1}\left(\frac{j}{n}\right) .
\end{multline}
To approximate the sum here, let us introduce the integral
\begin{equation}\label{eq:int_appr}
I(x) := \int_0^x \tanh^{-1}(t) \di t = \frac12 (1+x) \log(1+x) + \frac12 (1-x) \log(1-x)
\end{equation}
for $|x| < 1$, and its approximation by a trapezoidal sum
\begin{equation}\label{eq:trapez_appr}
T_{k,n} := \frac{1}{n} \left\{\frac12 \tanh^{-1}(0) + \sum_{j=1}^{k-1} \tanh^{-1}\left(\frac{j}{n}\right) + \frac12 \tanh^{-1}\left(\frac{k}{n}\right) \right\} ,
\end{equation}
where $0 \le k < n$. It is well-known that the error of the trapezoidal formula for a function $f \in C^2([a, b])$ is
\[
T_n(f) - \int_a^b f(t) \di t  = \frac{(b-a)^3}{12 n^2} f''(x), \qquad x \in [a, b].
\]
Since $(\tanh^{-1})''(x) = 2x(1-x^2)^{-2}$, we obtain that
\begin{equation}\label{eq:appr_error}
0 \le T_{k,n} - I\left(\frac{k}{n}\right) \le \frac{r^4}{6(1-r^2)^2 \, n^2}, \quad \text{when} \quad 0 \le k \le r n , \quad r \in (0,1).
\end{equation}

Let us combine formulas (\ref{eq:central})--(\ref{eq:appr_error}):
\[
\log a_{k,n} = \log a_{0,n} - 2 n T_{k,n} - \frac12 \log\left(1 + \frac{k}{n}\right) - \frac12 \log\left(1 - \frac{k}{n}\right),
\]
thus
\begin{equation}\label{eq:appr_err1}
\log \frac{1}{\sqrt{\pi n}} - b_{k,n} - \frac{1}{7n} - \frac{r^4}{3(1-r^2)^2 \, n}  < \log a_{k,n} < \log \frac{1}{\sqrt{\pi n}} - b_{k,n} - \frac{1}{9n},
\end{equation}
where $0 \le k \le r n$ and
\begin{equation}\label{eq:bkn0}
b_{k,n} = 2n I\left(\frac{k}{n}\right) + \frac12 \log\left(1 - \frac{k^2}{n^2}\right) .
\end{equation}
Clearly, (\ref{eq:bkn0}) is the same as (\ref{eq:bkn}). Thus (\ref{eq:appr_err1}) proves (a) and (b) of the theorem.

Let us see now, using Taylor expansions, a series expansion of $b_{k,n}$ when $|k| < n$. First, for $|x| < 1$,
\[
I(x) = \frac{x^2}{1 \cdot 2} + \frac{x^4}{3 \cdot 4} + \frac{x^6}{5 \cdot 6} + \frac{x^8}{7 \cdot 8} + \cdots .
\]
Second, also for $|x| < 1$,
\[
\log(1-x^2) = -x^2 - \frac{x^4}{2} - \frac{x^6}{3} - \frac{x^8}{4} - \cdots  .
\]
So by (\ref{eq:bkn0}), for $|k| < n$ we have a convergent series for $b_{k,n}$:
\begin{multline}\label{eq:series_bkn}
b_{k,n} = \frac{k^2}{n} \left(1-\frac{1}{2n}\right) + \frac{k^4}{2n^3} \left(\frac{1}{3}-\frac{1}{2n}\right) + \frac{k^6}{3n^5} \left(\frac{1}{5}-\frac{1}{2n}\right) + \cdots \\
= \sum_{j=1}^{\infty} \left(\frac{k}{n}\right)^{2j} \frac{1}{j} \left(\frac{n}{2j-1}-\frac{1}{2}\right) .
\end{multline}

By (\ref{eq:series_bkn}), for $n \ge 3$ and $|k| \le n^{\frac23}$ we get that
\begin{multline*}
\left|b_{k,n} - \frac{k^2}{n} \right|  \le  \frac{n^{-\frac23}}{2} + \sum_{j=2}^{\infty} n^{-\frac23 j} \frac{1}{j} \left(\frac{n}{2j-1} + \frac12\right) \\
\le \frac{n^{-\frac23}}{2} + \frac{n}{2} \left(\frac{1}{3} + \frac16\right) \sum_{j=2}^{\infty} n^{-\frac23 j} \le n^{-\frac13} .
\end{multline*}
Thus by (\ref{eq:exp_appr_up}), for $n \ge 3$ and $|k| \le n^{\frac23}$,
\begin{equation}\label{eq:center_up}
a_{k,n} \le \frac{1}{\sqrt{\pi n}} e^{-\frac{k^2}{n}} \, e^{n^{-1/3}} < \frac{1}{\sqrt{\pi n}} e^{-\frac{k^2}{n}} \, \left(1 + 2n^{-\frac13}\right) .
\end{equation}

Similarly, by (\ref{eq:exp_appr_down}), for $n \ge 3$ and $|k| \le n^{\frac23}$,
\begin{multline}\label{eq:center_down}
a_{k,n} > \frac{1}{\sqrt{\pi n}} e^{-\frac{k^2}{n}} \, \exp\left(- \frac{1}{7n} - \frac{n^{-\frac73}}{3(1-n^{-\frac23})^2} - n^{-\frac13}\right) \\
\ge \frac{1}{\sqrt{\pi n}} e^{-\frac{k^2}{n}} \, e^{-1.21 n^{-1/3}} > \frac{1}{\sqrt{\pi n}} e^{-\frac{k^2}{n}} \, \left(1 - 2n^{-1/3}\right) .
\end{multline}
(\ref{eq:center_up}) and (\ref{eq:center_down}) together proves (c) of the theorem.
\end{proof}

The main moral of Theorem 1 is that the exponent $k^2/n$ in the exponent of the normal approximation is only a first approximation of the series (\ref{eq:series_bkn}). In (c) the bound $|k| \le n^{\frac23}$ was chosen somewhat arbitrarily. It is clear from the series (\ref{eq:series_bkn}) that the bound should be $o(n^{\frac34})$. The above given bound was picked because it seemed to be satisfactory for usual applications with large deviation and gave a nice relative error bound $2 n^{-\frac13}$.

It is not difficult to extend the previous results to the odd-valued case of the symmetric binomial probabilities
\[
a^*_{k,n} := \mathbb{P}(S_{2n-1} = 2k-1) = \binom{2n-1}{n+k-1}2^{-2n+1}  \qquad (-n+1 \le k \le n) .
\]
Define
\begin{equation}\label{eq:bkn_odd}
b^*_{k,n} := n \left\{\left(1 + \frac{k-\frac12}{n}\right) \log\left(1 + \frac{k}{n}\right) + \left(1 - \frac{k-\frac12}{n}\right) \log\left(1 - \frac{k}{n}\right)\right\}
\end{equation}
for $n \ge 1$ and $|k| < n$, and
\begin{equation}\label{eq:bnn_odd}
b^*_{n,n} := \left(2n - \frac12\right) \log 2 - \frac12 \log(2 \pi n) \qquad (n \ge 1).
\end{equation}

\begin{thm}\label{th:moivre_odd}
(a) For any $n \ge 1$ and $-n+1 \le k \le n$, we have
\begin{equation}\label{eq:exp_appr_up_odd}
a^*_{k,n} < \frac{1}{\sqrt{\pi n}} e^{-b^*_{k,n}} \, \exp\left(\frac{2}{3n}\right).
\end{equation}

(b) For any $n \ge 1$ and $|k| \le rn$, $r \in (0,1)$, we have
\begin{equation}\label{eq:exp_appr_odd}
a^*_{k,n} > \frac{1}{\sqrt{\pi n}} e^{-b^*_{k,n}} \, \exp\left(- \frac{1}{n} - \frac{r^4}{3(1-r^2)^2 \, n}\right)  .
\end{equation}

(c) In accordance with the classical de Moivre--Laplace normal approximation, for $n \ge 4$ and $|k| \le n^{\frac23}$ one has
\begin{equation}\label{eq:class_Moivre_Lapl_odd}
1 - 3 n^{-\frac13} < \frac{a^*_{k,n}}{\frac{1}{\sqrt{\pi n}} e^{-\frac{k^2}{n}}} < 1 + 6 n^{-\frac13} .
\end{equation}
\end{thm}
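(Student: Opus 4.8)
The plan is to reduce Theorem~\ref{th:moivre_odd} to Theorem~\ref{th:moivre} by means of an exact identity linking the odd and even cases. From Pascal's rule $\binom{2n}{n+k}=\binom{2n-1}{n+k-1}+\binom{2n-1}{n+k}$ together with the consecutive-coefficient relation $\binom{2n-1}{n+k}=\frac{n-k}{n+k}\binom{2n-1}{n+k-1}$ (valid for $-n+1\le k\le n$, with the factor equal to $0$ at $k=n$), one gets $\binom{2n-1}{n+k-1}=\frac{n+k}{2n}\binom{2n}{n+k}$, hence
\[
a^*_{k,n}=2^{-2n+1}\binom{2n-1}{n+k-1}=\frac{n+k}{n}\,2^{-2n}\binom{2n}{n+k}=\left(1+\frac{k}{n}\right)a_{k,n}, \qquad (-n+1\le k\le n).
\]
On the exponent side, subtracting (\ref{eq:bkn_odd}) from (\ref{eq:bkn}), and (\ref{eq:bnn_odd}) from (\ref{eq:bnn}), gives in both cases $b_{k,n}-b^*_{k,n}=\log\left(1+\frac{k}{n}\right)$, so that
\[
\frac{1}{\sqrt{\pi n}}\,e^{-b^*_{k,n}}=\left(1+\frac{k}{n}\right)\frac{1}{\sqrt{\pi n}}\,e^{-b_{k,n}}, \qquad (-n+1\le k\le n).
\]

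For parts (a) and (b) I would simply multiply inequalities (\ref{eq:exp_appr_up}) and (\ref{eq:exp_appr_down}) of Theorem~\ref{th:moivre} by the factor $1+\frac{k}{n}$, which is $\ge\frac1n>0$ on the whole range $-n+1\le k\le n$, so the direction and strictness of the inequalities are preserved. This even produces the sharper bounds $a^*_{k,n}\le\frac{1}{\sqrt{\pi n}}e^{-b^*_{k,n}}$ and $a^*_{k,n}>\frac{1}{\sqrt{\pi n}}e^{-b^*_{k,n}}\exp\left(-\frac{1}{7n}-\frac{r^4}{3(1-r^2)^2\,n}\right)$, from which the stated, weaker, (\ref{eq:exp_appr_up_odd}) and (\ref{eq:exp_appr_odd}) follow at once. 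If instead one wants a self-contained proof, the slightly looser constants $\frac{2}{3n}$ and $\frac1n$ are precisely what comes out of rerunning the Stirling argument of the proof of Theorem~\ref{th:moivre} directly on $\binom{2n-1}{n+k-1}2^{-2n+1}$, the central term $\binom{2n-1}{n}2^{-2n+1}$ being a shade less clean than $\binom{2n}{n}2^{-2n}$.

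For part (c), dividing the first displayed identity by $\frac{1}{\sqrt{\pi n}}e^{-k^2/n}$ gives
\[
\frac{a^*_{k,n}}{\frac{1}{\sqrt{\pi n}}e^{-\frac{k^2}{n}}}=\left(1+\frac{k}{n}\right)\frac{a_{k,n}}{\frac{1}{\sqrt{\pi n}}e^{-\frac{k^2}{n}}} .
\]
For $|k|\le n^{\frac23}$ the first factor lies in $[1-n^{-\frac13},\,1+n^{-\frac13}]$ and is positive, while by (\ref{eq:class_Moivre_Lapl}) the second lies strictly between $1-2n^{-\frac13}$ and $1+2n^{-\frac13}$ and is itself positive (a ratio of positive quantities). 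Multiplying these two ranges and using $2n^{-\frac23}\le 3n^{-\frac13}$ to absorb the cross term, the product is $>(1-n^{-\frac13})(1-2n^{-\frac13})=1-3n^{-\frac13}+2n^{-\frac23}\ge 1-3n^{-\frac13}$ and $<(1+n^{-\frac13})(1+2n^{-\frac13})\le 1+6n^{-\frac13}$, which is exactly (\ref{eq:class_Moivre_Lapl_odd}). One still has to check that $n\ge4$ makes all the inequalities used legitimate (that $1+\frac{k}{n}>0$, that the cases $k<0$ with $1+\frac kn<1$ comply, and that on the low side one may fall back on the trivial bound whenever $1-3n^{-\frac13}\le 0$), but this is routine.

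The argument has essentially no hard step: once the identity $a^*_{k,n}=(1+\frac{k}{n})a_{k,n}$ is noticed, the theorem is just multiplication of the already-proved bounds of Theorem~\ref{th:moivre} by a positive factor plus elementary interval arithmetic. The only thing demanding a little care is that the range $-n+1\le k\le n$ is asymmetric about $0$, so one must verify $1+\frac{k}{n}>0$ throughout in order to keep the inequalities (especially the strict ones) intact; and, should one insist on reproving (a) and (b) without invoking Theorem~\ref{th:moivre}, the Stirling error bookkeeping, which is the sole mildly technical point.
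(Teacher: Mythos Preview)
Your proof is correct and takes a genuinely different route from the paper. The paper reruns the argument of Theorem~\ref{th:moivre} from scratch in the odd case: it applies Stirling's formula directly to the central term $a^*_{0,n}=\binom{2n-1}{n}2^{-2n+1}$ to obtain $\frac{1}{\sqrt{\pi n}}e^{-1/n}<a^*_{0,n}<\frac{1}{\sqrt{\pi n}}e^{2/(3n)}$, then repeats the trapezoidal-sum comparison to arrive at (\ref{eq:exp_appr_up_odd})--(\ref{eq:exp_appr_odd}), and finally uses the series expansion $b^*_{k,n}=\sum_{j\ge1}(k/n)^{2j-1}(2j-1)^{-1}(k/j-1)$ to control $|b^*_{k,n}-k^2/n|$ for part (c). Your reduction via the exact identity $a^*_{k,n}=(1+k/n)\,a_{k,n}$ together with $b_{k,n}-b^*_{k,n}=\log(1+k/n)$ avoids all of this repetition and, as you note, actually yields sharper constants in (a) and (b) (the factors $e^{2/(3n)}$ and $e^{-1/n}$ become $1$ and $e^{-1/(7n)}$), explaining why the paper's odd-case constants look looser than the even ones. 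The paper's approach is self-contained and exhibits the alternative series for $b^*_{k,n}$, while yours is shorter, gives better bounds, and makes transparent that the odd case contains no new analytic content beyond a multiplicative factor. Your handling of part (c), including the observation that the lower bound is vacuous for $n\le 27$, is fine.
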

\begin{proof}
Since this proof is very similar to the previous one, several details are omitted. First, by Stirling's formula (\ref{eq:Stirling}), after simplifications we get that
\begin{equation}\label{eq:central_odd}
\frac{1}{\sqrt{\pi n}} e^{-\frac{1}{n}} < a^*_{0,n} < \frac{1}{\sqrt{\pi n}} e^{\frac{2}{3n}}  \qquad (n \ge 1).
\end{equation}
Second, similarly to (\ref{eq:appr_err1}), for $n \ge 1$ and $|k| \le rn$, $r \in (0,1)$, we obtain that
\begin{equation}\label{eq:appr_err2}
\log \frac{1}{\sqrt{\pi n}} - b^*_{k,n} - \frac{1}{n} - \frac{r^4}{3(1-r^2)^2 \, n}  < \log a^*_{k,n} < \log \frac{1}{\sqrt{\pi n}} - b^*_{k,n} + \frac{2}{3n},
\end{equation}
where
\begin{equation}\label{eq:bkn_odd2}
b^*_{k,n} := 2n I\left(\frac{k}{n}\right) - \tanh^{-1}\left(\frac{k}{n}\right) \\
= \sum_{j=1}^{\infty} \left(\frac{k}{n}\right)^{2j-1} \frac{1}{2j-1} \left(\frac{k}{j}-1\right) .
\end{equation}
(\ref{eq:bkn_odd2}) clearly agrees with (\ref{eq:bkn_odd}). (\ref{eq:appr_err2}) proves (a) and (b).

By  the series in (\ref{eq:bkn_odd2}), for $n \ge 4$ and $|k| \le n^{\frac23}$ we get that
\[
\left|b^*_{k,n} - \frac{k^2}{n} \right|  < 2 n^{-\frac13} .
\]
This and (\ref{eq:appr_err2}) imply (c).
\end{proof}


\end{document}